\newcommand{\diesis}{^\#}
\newtheorem{theo}{Theorem}[section]
\newtheorem{prop}{Proposition}[section]
\DeclareMathOperator{\dint}{\displaystyle\int}
\DeclareMathOperator\erf{erf}
\theoremstyle{definition}
\newtheorem{definiz}{Definition}[section]
\newtheorem{rem}{Remark}[section]
\numberwithin{equation}{section}
\newcommand{\R}{\mathbb R}
\newcommand{\f}{\mathcal F}
\newcommand{\de}{\partial}
\newcommand{\ds}{\displaystyle}
\DeclareMathOperator{\divergenza}{div}
\begin{document}
\title[The Faber-Krahn inequality for the Hermite operator with Robin BCS]
{The Faber-Krahn inequality for the Hermite operator with Robin boundary condition}

\author[F. Chiacchio]{Francesco Chiacchio}
\address{
Francesco Chiacchio \\
Universit\`a degli Studi di Napoli Federico II\\
Dipartimento di Ma\-te\-ma\-ti\-ca e Applicazioni ``R. Caccioppoli''\\
Complesso di Monte Sant'Angelo, Via Cintia,
80126 Napoli, Italia.
}
\email{francesco.chiacchio@unina.it}

\author[N. Gavitone]{Nunzia Gavitone}
 \address{
Nunzia Gavitone \\
Universit\`a degli Studi di Napoli Federico II\\
Dipartimento di Ma\-te\-ma\-ti\-ca e Applicazioni ``R. Caccioppoli''\\
Complesso di Monte Sant'Angelo, Via Cintia,
80126 Napoli, Italia.
}
\email{nunzia.gavitone@unina.it}

\date{\today}
\maketitle
\begin{abstract}
In this paper we prove a Faber-Krahn type inequality for the first eigenvalue of the Hermite operator with Robin boundary condition. We prove that the optimal set is an half-space and we also address the equality case in such inequality. 
\end{abstract}

\vspace{.5cm}

\noindent\textsc{MSC 2020:}35P15, 35J25. \\
\textsc{Keywords}: Faber-Krahn  inequality, Hermite operator, Robin boundary condition.

\vspace{.5cm}
\section{Introduction}

This paper deals with the eigenvalue problem for the Hermite operator with  Robin boundary condition.
Let us denote by 
\[
d\gamma_N =\phi_N(x) \,dx
\]
the normalized Gaussian measure in $\R^N$, where  its density, $\phi_N(x)$, is given by 
\begin{equation*}
\phi _{N}(x)=\frac{1}{\left( 2\pi \right) ^{\frac{N}{2}}}\exp \left( -\frac{
|x|^{2}}{2}\right) .
\end{equation*}
 Let  $\Omega $ be a sufficiently smooth domain
of $\mathbb R^N$  
and let $\nu $ be the unit outer normal to 
$\partial \Omega $. We consider the following eigenvalue problem
\begin{equation}
 \label{PI}
\left\{ 
\begin{array}{ll}
-\text{div}\left( \phi _{N}(x)\nabla u(x))\right) =\lambda (\Omega )\phi
_{N}(x)u(x) & \text{in }\Omega  \\[0.2cm]
&  \\ 
\dfrac{\partial u}{\partial \nu }+\beta u=0 & \text{on }\partial \Omega ,
\end{array}
\right.  
\end{equation}
where $\beta >0$.
Our aim is to prove a Faber-Krahn
inequality for the first eigenvalue, $\lambda _{1}(\Omega )$, of problem \eqref{PI}.
More precisely,  
 we are interested in finding, in the class of the sets having prescribed
Gaussian measure, the one which minimizes $\lambda _{1}(\Omega )$.

The  Faber-Krahn inequality for the first Robin eigenvalue, for the classical Laplace operator, has been established in the planar case  in \cite{bos}. Subsequently, such a result has been generalized in any dimension in \cite{da06} and for the $p$-Laplace operator in \cite{bd10}. In all these cases the ball turns out to be the optimal set, when the Robin parameter $\beta$ is positive.

For $\beta<0$, the problem appears more delicate. 
Indeed   many phenomena can occur,
depending also on the dimension and the topological properties of the domain
(see, e.g., \cite{FK}, \cite{FNT}, \cite{AFK} and the references therein).

The difficulties of the problem when $\beta >0$ are twofold. From on hand the level sets of
the first eigenfunction are not closed, on the other, there is no
monotonicity of the first eigenvalue with respect to the inclusion of sets.
These facts prevent one from adapting the classical symmetrization methods, which
work for the Dirichlet boundary condition. In order to overcome these difficulties, in the papers quoted above, the authors used a   different approach. 
More precisely, in \cite{bos},  Bossel introduced a sort of ``desymmetrization" technique together with a representation formula for the first Robin eigenvalue. Concerning problem \eqref{PI}, some further issues arise since, in general, the domain $\Omega $ is unbounded and the first eigenfunction is not in $L^{\infty }(\Omega ). $
These circumstances do not allow  to apply, in a straightforward way, the Bossel's arguments. Our analysis requires a detailed study of problem \eqref{eq:2} in one dimension (see Section 3) and a suitable modification of the proof of the representation formula (see Section 4). One crucial point, in the study of the one-dimensional problem, is the  asymptotic behavior of the first eigenfunction, since it  allows  to prove, among other things, its $\log$-concavity which, in turn,  enables to prove the monotonicity property, with respect the inclusion of half lines, of the first eigenvalue.

Let us spend a few words about some motivations of the present note.
First of all, the Hermite operator, as well known,  enter in the description of the harmonic oscillator in quantum mechanics (see, e.g., \cite{BF} and the references therein). 
Moreover the interest in the Hermite operator comes from the fact that
Gaussian measure in $\mathbb R^N$ can be obtained as a limit, as $k$ goes to infinity, of the 
normalized surface measures on
$\mathbb S_{\sqrt{k}}^{k+N+1}$, 
the sphere in $\mathbb R^{k+N+2}$ of radius $\sqrt{k}$ (a process known in literature as ``Poincar\`{e} limit").

Note also that the Robin problem is often regarded as interpolating 
between the Dirichlet  ($\beta = 0$)  and Neumann ($\beta = +\infty $) cases. 

As far as the Hermite operator is considered and $\Omega$ varies in the class of domains with fixed Gaussian measure, it is known (see \cite{Eh} and \cite{BCF}) that 
the set which minimizes the first Dirichlet eigenvalue is given by an half-space. Hence it coincides with the isoperimetric set in the Gaussian Isoperimetric inequality (see, e.g., \cite{ST}, \cite{Bo}, \cite{CK}, \cite{CFMP} and the references therein).
On the other hand, when Neumann boundary condition are imposed, the situation
is quite different. As well known,  the problem of minimizing the first non-trivial
Neumann eigenvalue, $\mu_{1}(\Omega )$,
is meaningless and, hence, one tries to maximize it, instead. In \cite{ChdB} it
has been proved that among all smooth domains with prescribed Gaussian measure,
symmetric with respect to the origin the ball centered at the origin
maximizes  $\mu_{1}(\Omega )$.
Furthermore in \cite{ChdB} it is shown that, even removing such a topological
assumption, the half spaces  do not maximize it. 

This phenomenon is quite surprising since in the Euclidean case the ball
minimizes the first Dirichlet eigenvalue (the classical Faber-Krahn
inequality) and, at same time, it maximizes the first nontrivial Neumann
eigenvalue (the classical Szeg\" o-Weiberger inequality).

So it is natural  to investigate  which is the optimal set 
in the ``Gaussian-Faber-Krahn inequality"
when  Robin boundary condition are imposed. 

We finally point out that in  \cite{cgnt} the authors prove an isoperimetric inequality for the Robin torsional rigidity  related to the Hermite operator.

Our main result, Theorem \ref{main} below, requires
the validity of some functional embedding Theorems. In Section 2 (see definition \ref{omega})
we introduce and describe  the  family  $\mathcal G$ of those 
domains of $\mathbb R^N$ for which such results hold true.

\begin{theo}
\label{main}
Let $\Omega \in  \mathcal G  $, with $\mathcal G$ as in Definition \ref{omega}. Then
\begin{equation}
\label{fktesi}
 \lambda_1(\Omega^{\diesis})\le \lambda_1(\Omega),
\end{equation}
where $\Omega^{\diesis}=S_{\sigma^{\diesis}}$ is any half-space  such that $\gamma_N(\Omega^{\diesis})=\gamma_N(\Omega)$. Moreover,  equality holds

\vspace{.1cm}
\noindent in \eqref{fktesi}, if and only if, $\Omega$ is an half-space, modulo a rotation about the origin.  
\end{theo}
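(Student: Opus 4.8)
The plan is to adapt the desymmetrization method of Bossel and Daners to the Gaussian setting, the two essential ingredients being the representation formula for $\lambda_1(\Omega)$ proved in Section 4 and the Gaussian isoperimetric inequality. I start from the variational characterization
\[
\lambda_1(\Omega)=\min_{\substack{v\in H^1(\Omega,d\gamma_N)\\ v\not\equiv 0}}\frac{\dint_\Omega\abs{\nabla v}^2\,d\gamma_N+\beta\dint_{\de\Omega}v^2\,\phi_N\,d\mathcal H^{N-1}}{\dint_\Omega v^2\,d\gamma_N},
\]
whose minimizer is the positive first eigenfunction $u$ of \eqref{PI}. Writing $\Omega_t=\set{x\in\Omega:\ u(x)>t}$ for the superlevel sets, the representation formula yields a lower bound of the form $\lambda_1(\Omega)\ge\inf_{t\ge0}\mathcal R(\Omega_t)$, where $\mathcal R(E)$ is a geometric functional whose numerator couples the Gaussian perimeter of the portion of $\de E$ lying inside $\Omega$ with $\beta$ times the $\phi_N$-weighted surface measure of the portion of $\de E$ lying on $\de\Omega$, normalized by $\gamma_N(E)$.

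The heart of the argument is to minimize $\mathcal R$ over sets of prescribed Gaussian measure. Since the dominant geometric term in the numerator is controlled from below by the Gaussian perimeter of the whole set $E$, the Gaussian isoperimetric inequality (with equality exactly on half-spaces) shows that, among all sets $E$ with $\gamma_N(E)$ fixed, the quantity governing $\mathcal R(E)$ is smallest when $E=E^{\diesis}$ is a half-space. Replacing each superlevel set $\Omega_t$ by the half-space $\Omega_t^{\diesis}$ with $\gamma_N(\Omega_t^{\diesis})=\gamma_N(\Omega_t)$ produces a nested family of parallel half-spaces. This is where the one-dimensional analysis of Section 3 enters: on $\Omega^{\diesis}$ the eigenfunction depends on a single coordinate and solves a one-dimensional Hermite--Robin problem, and its $\log$-concavity together with the monotonicity of the one-dimensional eigenvalue with respect to inclusion of half-lines guarantee that the representation formula is sharp on half-spaces, i.e.\ $\inf_{t\ge0}\mathcal R(\Omega_t^{\diesis})=\lambda_1(\Omega^{\diesis})$. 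Chaining the estimates gives
\[
\lambda_1(\Omega)\ge\inf_{t\ge0}\mathcal R(\Omega_t)\ge\inf_{t\ge0}\mathcal R(\Omega_t^{\diesis})=\lambda_1(\Omega^{\diesis}),
\]
which is \eqref{fktesi}.

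For the equality case I would run this chain backwards. If $\lambda_1(\Omega)=\lambda_1(\Omega^{\diesis})$, then the Gaussian isoperimetric inequality must be saturated for almost every superlevel set $\Omega_t$; the rigidity of that inequality forces each such $\Omega_t$ to be a half-space. Since the $\Omega_t$ are nested and share the same reduced-boundary structure, they must be mutually parallel half-spaces, so $u$ is a function of a single coordinate and hence $\Omega=\Omega^{\diesis}$ modulo a rotation about the origin.

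The main obstacle, and the reason the classical Bossel argument cannot be quoted verbatim, is that $\Omega$ is typically unbounded and $u\notin L^\infty(\Omega)$. Controlling the tails so that the coarea decomposition, the integrations by parts underlying the representation formula, and the passage to the infimum over levels are all justified requires the precise asymptotic behavior of the one-dimensional eigenfunction established in Section 3. This same asymptotics is what makes the $\log$-concavity and the half-line monotonicity available, and therefore what ultimately secures the identification $\inf_{t\ge0}\mathcal R(\Omega_t^{\diesis})=\lambda_1(\Omega^{\diesis})$ on which the whole comparison rests.
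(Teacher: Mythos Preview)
Your overall architecture is the same as the paper's --- Bossel--Daners desymmetrization combined with the Gaussian isoperimetric inequality and the one-dimensional analysis of Section~3 --- but your description of the key functional is off in a way that hides the actual mechanism, and as written your chain of inequalities does not quite close.

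The functional $\mathcal F_\Omega(U_t,\psi)$ of Section~4 is \emph{not} a purely geometric functional of the level set: it depends on a nonnegative test function $\psi$ through the terms $-\int_{U_t}\psi^2\phi_N\,dx$ and $\int_{\partial U_t^{\mathrm{int}}}\psi\,\phi_N\,d\mathcal H^{N-1}$. The representation formula (Theorem~4.1) gives an \emph{equality} $\lambda_1(\Omega)=\mathcal F_\Omega(U_t,\bar\psi)$ for the specific choice $\bar\psi=|\nabla u|/u$, not a lower bound, so your inequality ``$\lambda_1(\Omega)\ge\inf_t\mathcal R(\Omega_t)$'' does not follow from it. The inequality you actually need is Theorem~4.3: for \emph{any} nonnegative $\psi\in L^2(\Omega,\phi_N)$ there is a set of levels $t$ of positive measure with $\lambda_1(\Omega)\ge\mathcal F_\Omega(U_t,\psi)$. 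The whole point is then to \emph{choose} $\psi$ cleverly: one transplants the function $\beta(r)=-w'(r)/w(r)$ from the half-space problem by setting $\psi(x)=\beta(r(t))$ on the level $\{u=t\}$, where $S_{r(t)}$ is the half-space with $\gamma_N(S_{r(t)})=\gamma_N(U_t)$.

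With this $\psi$ the comparison $\mathcal F_\Omega(U_t,\psi)\ge\mathcal F_{\Omega^{\#}}(S_{r(t)},\psi^*)=\lambda_1(\Omega^{\#})$ goes through because (i) the bulk integrals of $\psi^2$ agree by equimeasurability, and (ii) the boundary terms compare via the Gaussian isoperimetric inequality \emph{together with} the bound $\beta(r(t))\le\beta$ from Proposition~3.1. This last point is where the log-concavity really enters: it is not used to make the representation formula ``sharp on half-spaces'' (that is automatic from Theorem~4.1), but to guarantee that the constant $\beta(r(t))$ multiplying the interior perimeter never exceeds the Robin parameter $\beta$ multiplying the exterior perimeter, so that $\beta(r(t))P_{\phi_N}(U_t)\le\int_{\partial U_t^{\mathrm{int}}}\psi\,\phi_N+\beta\int_{\partial U_t^{\mathrm{ext}}}\phi_N$. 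Your remarks on unboundedness, asymptotics, and the equality case are accurate and match the paper.
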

The paper is organized as follows. Section 2 contains the isoperimetric inequality with respect to the Gaussian measure, along with some preliminary results on the eigenvalue problem \eqref{PI}. In the third Section we analyze the problem in one dimension. As said before, among other things, we prove  the $\log$-concavity of any  positive first eigenfunction.
Note that in $\mathbb R^N$ such a property does not hold true, in general, even for the classical Laplace operator (see \cite{ACH}).
 Section 4   is devoted to the representation formula for the first eigenvalue of the problem \eqref{PI}. Finally, in the last Section, we prove our main result, Theorem \ref{main}.

\section{Preliminary results} 
 
\subsection{The Gaussian isoperimetric inequality}

Let $\phi_N (x)$ denotes the density of the normalized Gaussian measure in
 $\mathbb{R}^{N}\,$ i.e. 
\begin{equation*}
\phi _{N}(x)=\frac{1}{\left( 2\pi \right) ^{\frac{N}{2}}}\exp \left( -\frac{
|x|^{2}}{2}\right) .
\end{equation*}
One defines the Gaussian perimeter of any Lebesgue measurable set $A$ of $
\mathbb{R}^{N}$ as follows 
\begin{equation*}
P_{\phi_N }(A)=\left\{ 
\begin{array}{cc}
\displaystyle\int_{\partial A}\phi_N (x)\text{ }d\mathcal{H}^{N-1}(x) & \text{
if }\partial A\text{ is }(N-1)-\text{rectifiable} \\ 
&  \\ 
+\infty  & \text{otherwise,}
\end{array}
\right. 
\end{equation*}
where $d\mathcal H^{N-1}$ denotes the $(N-1)-$dimensional Hausdorff measure in $\R^N$.

While the Gaussian measure of $A$ is given by 
\begin{equation}
\gamma_N(A)=\int_{A}\phi_N (x)\,dx\in \left[ 0,1\right] .  \label{mis_pes}
\end{equation}
The celebrated Gaussian isoperimetric inequality (see \cite{ST}, \cite{Bo}
and \cite{Eh}) states that among all Lebesgue measurable sets in $\mathbb{R}
^{N},$ with prescribed Gaussian measure, the half-spaces 
minimize the Gaussian perimeter. Furthermore the isoperimetric set is
unique, clearly, up to a rotation with respect to the origin (see \cite{CK}
and \cite{CFMP}).

Let $a\in \mathbb{R}$, in the sequel we will use the following notation 
\begin{equation}
\label{ss}
S_{a}:=\{x=(x_{1},x_{2},\cdots ,x_{n})\in \mathbb{R}^{N}\text{ }\colon \text{
}x_{1}<a\}.
\end{equation}
If $\Omega \subset \mathbb{R}^{N}$ is a Lebesgue measurable set, its
Gaussian symmetrized, $\Omega ^{\diesis },$ is
\begin{equation}
\Omega ^{\diesis }=S_{\sigma ^{\diesis }}  \label{S}
\end{equation}
where $\sigma ^{\diesis }$ is such that 

\begin{equation}
\gamma_N( \Omega)=\gamma_N(\Omega ^{\diesis
})=\frac{1}{\sqrt{2\pi }}\int_{-\infty }^{\sigma ^{\diesis
}}\,\text{exp}\left( -\dfrac{t^{2}}{2}\right) dt=\frac{1}{2}+\frac{1}{2}\erf
\left( \frac{\sigma ^{\diesis }}{\sqrt{2}}\right),  \label{sigma}
\end{equation}
where $\erf$ stands for the standard error function.

The isoperimetric function in Gauss space, $g(s),$ is given by
\begin{equation*}
g:s\in \left[ 0,1\right] \rightarrow g(s)=\frac{1}{\sqrt{2\pi }}\exp \left[ -
\frac{\left( \erf^{-1}(2s-1)\right) ^{2}}{2}\right].
\end{equation*}
Note, indeed, that the Gaussian perimeter of any half-space of Gaussian
measure $s\in \left[ 0,1\right] $ is equal to $g(s).$ The isoperimetric
property of the half-spaces can finally be stated also in the following
analytic form.

\begin{theo}
\label{isop} If $\Omega \subset \mathbb{R}^{N}$ is any Lebesgue measurable
set it holds that

\begin{equation*}
P_{\phi_N }(\Omega )\geq P_{\phi_N }(\Omega ^{\diesis})
=
g \left(\gamma_N(
\Omega)\right),
\end{equation*}
where equality holds, if and only if, $\Omega $ is equivalent to an half-space.
\end{theo}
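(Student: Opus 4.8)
I would organize the argument into three pieces: the computation that identifies the right-hand side, the inequality itself, and the analysis of the equality case.

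First, the identity $P_{\phi_N}(\Omega^{\diesis})=g(\gamma_N(\Omega))$ is a direct calculation. Since $\Omega^{\diesis}=S_{\sigma^{\diesis}}$ has boundary the hyperplane $\{x_1=\sigma^{\diesis}\}$ and the Gaussian density factorizes, integration in the remaining $N-1$ variables produces a factor $1$, so that
\[
P_{\phi_N}(\Omega^{\diesis})=\int_{\{x_1=\sigma^{\diesis}\}}\phi_N\,d\mathcal H^{N-1}=\frac{1}{\sqrt{2\pi}}\exp\left(-\frac{(\sigma^{\diesis})^2}{2}\right).
\]
Recalling from \eqref{sigma} that $\sigma^{\diesis}=\sqrt 2\,\erf^{-1}(2\gamma_N(\Omega)-1)$ and comparing with the definition of $g$, the claimed equality follows at once.

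For the inequality $P_{\phi_N}(\Omega)\ge P_{\phi_N}(\Omega^{\diesis})$ I would reduce, through the Poincar\'e limit recalled in the Introduction, to the isoperimetric inequality on the sphere. By approximation it suffices to treat smooth $\Omega$. The L\'evy--Schmidt theorem asserts that on the round sphere, endowed with the normalized uniform measure, geodesic caps minimize the normalized surface measure of the boundary at fixed measure. Given $\Omega\subset\R^N$, I would lift it to the cylindrical set $\widetilde\Omega_k=\{y\in \mathbb S_{\sqrt k}^{k+N+1}\colon (y_1,\dots,y_N)\in\Omega\}$, apply the spherical inequality on $\mathbb S_{\sqrt k}^{k+N+1}$, and let $k\to\infty$. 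The analytic facts to verify are that the pushforward of the normalized surface measure onto the first $N$ coordinates converges to $\gamma_N$, that the spherical perimeter of $\widetilde\Omega_k$ converges to $P_{\phi_N}(\Omega)$, and that the extremal caps degenerate in the limit precisely onto half-spaces $S_a$; passing to the limit then yields the Gaussian inequality. An equivalent route avoids the sphere altogether by establishing Bobkov's functional inequality $g\!\left(\int f\,d\gamma_N\right)\le\int\sqrt{g(f)^2+|\nabla f|^2}\,d\gamma_N$ via a two-point inequality tensorized over the discrete cube together with the central limit theorem, and then specializing $f$ to mollified indicators of $\Omega$.

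The equality case is the delicate point, and the limiting argument above does not settle it, since passing to the limit destroys the rigidity information. I would instead invoke the sharp description of extremal sets of finite Gaussian perimeter: if $\Omega$ attains equality, then $P_{\phi_N}(\Omega)=P_{\phi_N}(\Omega^{\diesis})$ with $\gamma_N(\Omega)=\gamma_N(\Omega^{\diesis})$, and the rigidity results of \cite{CK} and \cite{CFMP} force $\Omega$ to agree, up to a set of Gaussian measure zero, with a half-space, necessarily of the form $S_a$ modulo a rotation about the origin. This uniqueness statement, rather than the inequality, is the main obstacle: exhibiting one optimal half-space is immediate from the computation above, whereas excluding every other competitor requires the structure theory of sets of finite perimeter in Gauss space in place of the soft compactness that suffices for the inequality itself.
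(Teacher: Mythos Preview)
The paper does not prove Theorem~\ref{isop}; it records it as a classical result, citing \cite{ST}, \cite{Bo}, \cite{Eh} for the inequality and \cite{CK}, \cite{CFMP} for the characterization of the equality case, and then moves on. Your proposal therefore goes well beyond what the paper itself offers: you supply the elementary computation identifying $P_{\phi_N}(\Omega^{\diesis})$ with $g(\gamma_N(\Omega))$, sketch two standard routes to the inequality (the Poincar\'e limit from the L\'evy--Schmidt spherical isoperimetric theorem, and Bobkov's functional inequality via the two-point tensorization), and for the rigidity you invoke precisely the references \cite{CK} and \cite{CFMP} that the paper relies on. Both sketches are correct in outline and are among the accepted proofs in the literature; you are also right to flag that the limiting argument alone does not recover rigidity, which is why one must appeal to the structure results in those references. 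In short, there is no discrepancy to report: the paper treats the statement as known, and your proposal is a faithful expansion of that citation.
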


\bigskip 

\subsection{The Sobolev space \textbf{$H^1(\Omega,\phi_N)$}}

Let $\Omega \subset \mathbb R^n$ be an open connected set.
     We will denote by $L^{2}(\Omega ,\phi_N )$ the set of all real
measurable functions defined in $\Omega$, such that
\begin{equation*}
\left\Vert u\right\Vert _{L^{2}(\Omega ,\phi_N )}^{2}:=\int_{\Omega
}u^{2}(x)\phi_N (x)dx<+\infty .
\end{equation*}
For our future purposes we need also to introduce the following weighted
Sobolev space
\begin{equation*}
H^{1}(\Omega ,\phi_N ):=\left\{ u\in W_{\text{loc}}^{1,1}(\Omega
):(u,\left\vert \nabla u\right\vert )\in L^{2}(\Omega ,\phi_N )\times L^{2}(\Omega
,\phi_N )\right\} ,
\end{equation*}
endowed with the norm
\begin{equation*}
\left\Vert u\right\Vert _{H^{1}(\Omega ,\phi_N )}=\left\Vert u\right\Vert
_{L^{2}(\Omega ,\phi_N )}+\left\Vert \nabla u\right\Vert _{L^{2}(\Omega ,\phi_N )}.
\end{equation*}
In the sequel of the paper, we need to introduce the following family of sets..
\begin{definiz}
\label{omega}
A Lipschitz domain $\Omega$ of $\mathbb R^N$ is in $\mathcal G$ 
if  $\gamma_N(\Omega) \in (0,1)$
and  the following conditions are fulfilled:
\begin{itemize}
\item[(i)]  $H^{1}(\Omega ,\phi_N )$ is compactly embedded in 
$L^{2}(\Omega ,\phi_N )$. 
\item[(ii)]  The trace operator $T$
\begin{equation*}
T:u\in H^{1}(\Omega ,\phi_N )\rightarrow \left. u\right\vert _{\partial \Omega
} \in L^{2}(\partial \Omega ,\phi_N ),
\end{equation*}
is well defined;

\item[(iii)] The trace operator defined in the previous point is compact from $H^{1}(\Omega ,\phi_N )$ onto  $L^{2}(\partial \Omega ,\phi_N )$.

\end{itemize}
In (ii) and (iii) the functional space $L^{2}(\partial \Omega ,\phi_N )$ is endowed with the
norm
\begin{equation*}
\left\Vert u\right\Vert _{L^{2}(\partial \Omega ,\phi_N )}^{2}=\int_{\partial
\Omega }u^{2}(x)\phi_N (x)d\mathcal{H}^{N-1}(x).
\end{equation*}
\end{definiz}

\begin{rem}
Observe that $\mathcal G$ is non empty. Indeed it contains, at least, the following families of sets
\begin{itemize}
\item[(j)] All the bounded and Lipschitz domains of $\mathbb R^N$.

\item[(jj)] All the convex domains of $\mathbb R^N$, not necessarily bounded.

\item[(jjj)] All the Lipschitz domains of $\mathbb R^N$, not necessarily bounded,
 for which an extension Theorem holds true (see, e.g., \cite{FP}, \cite{HR}, 
\cite{BCKT} and \cite{Li}).

\end{itemize}
%

\end{rem}

%




The study of functional inequalities related to the Gaussian measure,
also because they are often generalizable  to infinite-dimensional spaces,
has given rise to a rich line of research, starting from the seminal paper by Gross (see 
\cite{Gross}). 
The related bibliography is very wide, we remind the interested reader to 
\cite{Adams}, \cite{ADPP}, \cite{BH}, \cite{B},  \cite{BCT},  \cite{C2008}, 
\cite{CMP},     \cite{PT} and the references therein.

\subsection{The eigenvalue problem}
\label{autval}
Let $\Omega \in \mathcal G$, with $\mathcal G$ as in Definition \ref{omega}.  We finally consider the  eigenvalue Robin boundary value problem
\begin{equation}
  \label{eq:2}
  \left\{
    \begin{array}{ll}
      -\divergenza\left(\phi_N(x)\nabla u(x))\right)=\lambda(\Omega) \phi_N(x) u(x)
      &\text{in } 
      \Omega,\\[.2cm] 
      \displaystyle\frac{\partial u}{\partial \nu} +\beta u=0
      &\text{on } \de\Omega. 
    \end{array}
  \right.
\end{equation}
where $\nu$ is the unit outer normal to $\partial\Omega$ and  $\beta>0$. 
A real number $\lambda(\Omega)$ and a function $u\in H^1(\Omega,\phi_N)$ are respectively called eigenvalue and corresponding eigenfunction of the problem  \eqref{eq:2}  if the following equality holds
  \begin{equation}
  \label{defsol}
  \int_{\Omega}\langle \nabla u, \nabla\psi\rangle\, \phi_N  \,dx +\beta \int_{\de \Omega} u\psi \phi_N\, \,d\mathcal H^{N-1}(x)=\lambda(\Omega)\int_{\Omega} u\,\psi\,\phi_N \,dx,\quad \psi\in H^{1}(\Omega,\phi_N).
  \end{equation}

By the definition of  $ \mathcal G$,  one can apply 
the standard 
theory on compact, self-adjoint operator
to problem  \eqref{eq:2}. Hence 
we can arrange the eigenvalues of \eqref{eq:2}
in a non-decreasing sequence 
 $\{   \lambda_n(\Omega)  \}_{n \in \mathbb N}   $,
 such that $\lim_{n \to \infty}\lambda_n(\Omega)=+\infty$.
Moreover, the smallest eigenvalue of \eqref{eq:2}, $\lambda_1(\Omega)$,
 has the following variational characterization
\begin{equation}\label{var.beta}
\lambda_1(\Omega)=\inf_{v \in H^1(\Omega,\phi_N)\setminus\{0\}}J[\beta,v],
\end{equation}
where 
  \begin{equation}
  J[\beta,v]=\frac{\ds\int_{\Omega} |\nabla v|^2 \phi_N\, dx+\beta\ds\int_{\de\Omega}v^2 \phi_N \,d\mathcal H^
{N-1}(x)}{\ds\int_{\Omega}v^2 \phi_N\, dx}.
  \end{equation}
Conversely, any function on which the functional  $J[\beta,v]$ achieves its minimum is an eigenfunction of  \eqref{eq:2}.
Furthermore the following result holds.
\begin{theo}
\label{esi}
Let $\Omega \in \mathcal G$, with $\mathcal G$ as in Definition \ref{omega} and let  $\lambda_1(\Omega)$ be the first eigenvalue of \eqref{eq:2}. Then  the corresponding eigenfunctions are smooth,  have  constant sign in $\Omega$ and $\lambda_1(\Omega)$ is simple, that is the first eigenfunction is unique up to a multiplicative constant. 
\end{theo}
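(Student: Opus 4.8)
The plan is to establish the three assertions --- \emph{smoothness}, \emph{constant sign}, and \emph{simplicity} --- in that logical order, exploiting the variational characterization \eqref{var.beta} together with classical elliptic theory applied to the weighted operator. First I would rewrite the equation in \eqref{eq:2} in non-divergence form: since $\nabla\phi_N(x)=-x\,\phi_N(x)$ and $\phi_N>0$, dividing the identity $-\divergenza(\phi_N\nabla u)=\lambda_1(\Omega)\phi_N u$ by $\phi_N$ yields the Ornstein--Uhlenbeck type equation
\begin{equation*}
-\Delta u+\langle x,\nabla u\rangle=\lambda_1(\Omega)\,u\qquad\text{in }\Omega .
\end{equation*}
This equation is uniformly elliptic on every compact subset of $\Omega$, with real-analytic coefficients, so interior elliptic regularity and a standard bootstrap give that every eigenfunction $u\in H^1(\Omega,\phi_N)$ is $C^\infty$ in $\Omega$. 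Regularity up to $\partial\Omega$, where the Robin condition $\partial u/\partial\nu+\beta u=0$ is an oblique-derivative condition, then follows from the smoothness of the boundary via the corresponding Schauder estimates.

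For the constant sign, I would use that the first eigenfunctions are exactly the minimizers of $J[\beta,\cdot]$: indeed, testing the weak formulation \eqref{defsol} with $\psi=u$ shows $J[\beta,u]=\lambda_1(\Omega)$ for any first eigenfunction, so it realizes the infimum in \eqref{var.beta}. Given such a minimizer $u$, note that $|u|\in H^1(\Omega,\phi_N)$ with $|\nabla|u||=|\nabla u|$ a.e., while the boundary and normalization integrals are unchanged because $|u|^2=u^2$. Hence $J[\beta,|u|]=J[\beta,u]=\lambda_1(\Omega)$, so $|u|$ is itself a first eigenfunction; by the previous step it is smooth and satisfies $-\Delta|u|+\langle x,\nabla|u|\rangle=\lambda_1(\Omega)|u|\geq 0$ with $|u|\geq 0$ and $|u|\not\equiv 0$. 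The zeroth-order coefficient may have the unfavourable sign, but the Harnack inequality for non-negative solutions of uniformly elliptic equations --- applied on a compact subdomain, where the drift $x$ is bounded --- forces $|u|>0$ there; propagating this via the connectedness of $\Omega$ gives $|u|>0$ throughout $\Omega$. By continuity of $u$ this excludes any sign change, so $u$ has constant sign.

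Simplicity then follows quickly. Suppose $\lambda_1(\Omega)$ were not simple; then its eigenspace would contain two $L^2(\Omega,\phi_N)$-orthogonal eigenfunctions $u_1,u_2$, so $\int_\Omega u_1u_2\,\phi_N\,dx=0$. Each $u_i$ is a first eigenfunction, hence by the previous paragraph has constant sign, and replacing $u_i$ by $-u_i$ if necessary we may assume $u_1>0$ and $u_2>0$ in $\Omega$. But then $\int_\Omega u_1u_2\,\phi_N\,dx>0$, a contradiction. Therefore the eigenspace is one-dimensional, i.e.\ $\lambda_1(\Omega)$ is simple.

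The only genuinely delicate point is the invocation of the strong maximum principle / Harnack inequality for the weighted operator: the density $\phi_N$ degenerates at infinity and the drift $x$ is unbounded, so no global statement can be applied directly. I expect the hard part to be precisely this localization --- working on an exhaustion of $\Omega$ by compact sets on which the operator is uniformly elliptic with bounded coefficients, and then transferring the resulting strict positivity across $\Omega$ using its connectedness. Once this is set up, the boundary-regularity step and the orthogonality argument are routine.
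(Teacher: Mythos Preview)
Your proposal is correct and follows essentially the same route as the paper: smoothness from interior elliptic regularity, constant sign via $J[\beta,|u|]=J[\beta,u]$ plus Harnack's inequality, and simplicity from the resulting positivity. In fact you supply considerably more detail than the paper, which simply invokes ``classical regularity results'', Harnack, and ``the standard arguments for the classical Laplace operator''; your explicit localization remark for Harnack and your orthogonality argument for simplicity are exactly what those phrases unpack to.
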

\begin{proof}
By classical regularity results the first eigenfunctions are smooth in $\Omega$. Let  $u $ be  an eigenfunction corresponding to  $\lambda_1(\Omega)$, then  both  $u $ and $|u|$ minimize  \eqref{var.beta},
 since $J(\beta, u)=J(\beta, |u|)$. Therefore  $|u|$ is a first eigenfunction too. By Harnack's inequality we deduce  that $|u|$  can not vanishes inside 
 $\Omega$ and therefore $u$ has one sign in $\Omega$.  The simplicity of  
$\lambda_1(\Omega)$  follows repeating the standard arguments for the classical Laplace operator.
\end{proof}
\begin{rem}
\label{car}
Since eigenfunctions corresponding to different  eigenvalues are mutually orthogonal
in $L^2(\Omega,\phi_N)$,
the previous result implies that any positive function $v \in H^1(\Omega,\phi_N)$, which solves problem \eqref{eq:2} for some $\lambda \in \mathbb R$,  is a first eigenfunction,
that is $\lambda=\lambda_1(\Omega)$.
\end{rem}
\section{The eigenvalue problem in one dimension}
In this Section we consider problem \eqref{eq:2} in one dimension, that is
\begin{equation}
  \label{uni1}
  \left\{
    \begin{array}{ll}
-w''+tw'= \lambda(\sigma) w &  \,\, t \in  I_{\sigma}\\[.2cm] 
      w'(\sigma) +\beta w(\sigma)=0,
    \end{array}
  \right.
\end{equation}
where $I_{\sigma}=(-\infty,\sigma)$.

By Theorem \ref{esi}  the first eigenvalue $\lambda_1(\sigma)$ is simple and the corresponding eigenfunctions $w(t)  $ are smooth and have one sign in $I_{\sigma}$. In this case, the variational characterization reads as
\begin{equation}
\label{aut}
\lambda_1(\sigma)
=
\min_{\substack{v \in H^1\left(I_{\sigma},\phi_1\right)\setminus \{0\}}}\displaystyle \frac{\displaystyle \int_{-\infty}^{\sigma} (v'(t))^2 e^{-\frac{t^2}{2}}\,dt 
+
 \beta (v(\sigma))^2 e^{-\frac{\sigma^2}{2}}}{\displaystyle \int_{-\infty}^{\sigma} (v'(t))^2 e^{-\frac{t^2}{2}}\,dt},
\end{equation}
By standard theory on hypergeometric functions (see for example \cite{Eh},\cite{tri}), the minimizers $w$ of $\lambda_1(\sigma) $ have the following Taylor expansion
\begin{equation}
\label{sol1}
w(t)=C \sum_{m=0}^{\infty}\binom{\lambda_1(\sigma)}{m} \displaystyle \frac{\left( t/\sqrt 2\right)^m}{\Gamma\left( \frac{1-\lambda_1(\sigma)+m}{2}\right)},
\end{equation}
where $C \in \mathbb R$ and $\Gamma$ is the usual Gamma function. In particular when $\lambda_1(\sigma)\in \mathbb N$, the right hand-side in \eqref{sol1} reduces to the following Hermite polynomial of degree $\lambda_1(\sigma)$  
\[
w(t)= C \text{exp}\left( \frac{t^2}{2}\right) \displaystyle \frac{d^{\lambda_1(\sigma)}}{dt^{\lambda_1(\sigma)}} \text{exp}\left(- \frac{t^2}{2}\right).
\]
Moreover it is possible to show (see, for instance, \cite{tri}, pp. 34-35 and pp. 72-76) that $w$ has the following asymptotic behavior 
\begin{equation}
\label{asint}
w\propto t^{\lambda_1(\sigma)} \left(1+O(t^{-2}) \right) \quad \text{for } t \to -\infty .
\end{equation}
Problem \eqref{uni1} is strictly related to  problem \eqref{eq:2} when $\Omega$ is an half-space.
 More precisely let $\sigma \in \mathbb R$  and $S_{\sigma}$ as in \eqref{ss},  and let us consider the following problem
\begin{equation}
  \label{semi}
  \left\{
    \begin{array}{ll}
      -\divergenza\left(\phi_N(x)\nabla u)\right)=\lambda_1(S_\sigma) \phi_N(x) u(x)
      &\text{in } 
      S_{\sigma},\\[.2cm] 
      \displaystyle\frac{\partial u}{\partial x_1} +\beta u=0
      &\text{on } \{x_1=\sigma\},
    \end{array}
  \right.
\end{equation}
where  $\lambda_1(S_\sigma)$ is the first eigenvalue given by
 \begin{equation}
 \label{var}
 \lambda_1(S_\sigma)=\min_{v\in H^1(S_\sigma,\phi_N)\setminus \{0\}}\displaystyle \frac{\dint_{S_{\sigma}}|\nabla v|^2 \phi_N\,dx +\beta \dint_{\substack{\{x_1=\sigma\}}}v^2\phi_N \,d\mathcal{H}^{N-1}(x)}{\dint_{S_{\sigma}}v^2\phi_N\,dx}
 \end{equation}
 
Theorem \ref{esi} ensures that $\lambda_1(S_\sigma)$ is simple and  it admits a positive corresponding eigenfunction $u(x)$. In what follows we observe that the eigenfunctions $u$ are determined by the ones of problem \eqref{uni1}.  
 \begin{theo}
 \label{es_uni}
Let $u$ be a  positive eigenfunction corresponding to $\lambda_1(\sigma)$.
 Then  there exists a function 
$w(t) \colon I_{\sigma}\to (0,+\infty) $ 
such that $u(x)=w(x_1)$. Moreover $w$ is strictly monotone decreasing in $I_{\sigma}$ and it solves \eqref{uni1}, with $\lambda_1(\sigma)=\lambda_1(S_{\sigma})$.
 \end{theo}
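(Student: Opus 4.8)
The plan is to show that any positive first eigenfunction of the half-space problem \eqref{semi} depends only on the variable $x_1$, and that the resulting one-variable profile is exactly a first eigenfunction of \eqref{uni1}. The strategy rests on the uniqueness (simplicity) granted by Theorem \ref{esi}: since $\lambda_1(S_\sigma)$ is simple, it suffices to \emph{produce} one positive eigenfunction of \eqref{semi} that has the separated form $u(x)=w(x_1)$, and then Remark \ref{car} forces every positive eigenfunction to coincide with it up to a constant.

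First I would start from a positive eigenfunction $w(t)$ of the one-dimensional problem \eqref{uni1}, whose existence and positivity are guaranteed by Theorem \ref{esi} applied to $I_\sigma$. Define $u(x):=w(x_1)$ on $S_\sigma=\{x_1<\sigma\}$. I would then verify directly that $u$ solves \eqref{semi} with the same eigenvalue. The key computation is that
\begin{equation*}
-\divergenza\!\left(\phi_N(x)\nabla u\right)
=-\partial_{x_1}\!\left(\phi_N(x)\,w'(x_1)\right)
=\phi_N(x)\left(-w''(x_1)+x_1 w'(x_1)\right),
\end{equation*}
where the last equality uses $\partial_{x_1}\phi_N=-x_1\phi_N$ together with the fact that the $x_2,\dots,x_N$ derivatives of $u$ vanish; here the Gaussian weight conveniently collapses the $N$-dimensional operator to the one-dimensional Ornstein--Uhlenbeck-type operator appearing in \eqref{uni1}. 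Since $w$ satisfies $-w''+tw'=\lambda_1(\sigma)w$, the interior equation holds with $\lambda_1(S_\sigma)=\lambda_1(\sigma)$. On the boundary $\{x_1=\sigma\}$ the outer normal is $e_1$, so $\partial u/\partial\nu=\partial u/\partial x_1=w'(\sigma)$, and the Robin condition $w'(\sigma)+\beta w(\sigma)=0$ translates verbatim into $\partial u/\partial\nu+\beta u=0$. Thus $u$ is a genuine eigenfunction, and since $w>0$, it is positive.

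At this point Remark \ref{car} closes the argument: any positive function in $H^1(S_\sigma,\phi_N)$ solving \eqref{semi} for some $\lambda$ must be a first eigenfunction, and by simplicity it is a scalar multiple of $u(x)=w(x_1)$; hence every positive first eigenfunction depends only on $x_1$. The remaining assertions are then immediate—$w$ solves \eqref{uni1} by construction, and strict monotonicity of $w$ follows from the log-concavity/monotonicity analysis of the one-dimensional eigenfunction established elsewhere in Section~3, or directly by a maximum-principle argument showing $w'<0$ throughout $I_\sigma$ once one knows $w>0$ and $w'(\sigma)=-\beta w(\sigma)<0$.

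The main obstacle I anticipate is \emph{membership in the right function space} rather than the formal computation: one must confirm that $u(x)=w(x_1)$ genuinely lies in $H^1(S_\sigma,\phi_N)$, which requires controlling $w$ and $w'$ against the Gaussian weight as $t\to-\infty$. This is exactly where the asymptotic behavior \eqref{asint}, $w\propto t^{\lambda_1(\sigma)}(1+O(t^{-2}))$, becomes essential: the polynomial growth of $w$ is dominated by the Gaussian factor $e^{-t^2/2}$, so both $\int_{-\infty}^\sigma w^2 e^{-t^2/2}\,dt$ and $\int_{-\infty}^\sigma (w')^2 e^{-t^2/2}\,dt$ converge, and integrating over the transverse variables against the product structure of $\phi_N$ keeps the full $N$-dimensional norms finite. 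Verifying this integrability—and checking that the separated form is compatible with the weak formulation \eqref{defsol}—is the only genuinely technical point; the rest is bookkeeping.
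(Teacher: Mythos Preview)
Your overall architecture matches the paper's: start from a positive first eigenfunction $w$ of the one-dimensional problem \eqref{uni1}, lift it to $u(x)=w(x_1)$ on $S_\sigma$, check that $u$ solves \eqref{semi}, and then invoke Remark~\ref{car} together with simplicity to conclude $\lambda_1(\sigma)=\lambda_1(S_\sigma)$ and that every positive first eigenfunction of \eqref{semi} has the separated form. The PDE/boundary verification and the $H^1$-membership discussion you give are fine (in fact the latter is simpler than you make it: since $w\in H^1(I_\sigma,\phi_1)$ by the variational characterization, and $\phi_N$ factors as $\phi_1(x_1)\phi_{N-1}(x')$, Fubini gives $u\in H^1(S_\sigma,\phi_N)$ without invoking \eqref{asint}).

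The genuine gap is the strict monotonicity of $w$, which in the paper is \emph{the} substantive content of this theorem, not an afterthought. Your first option---deferring to ``the log-concavity/monotonicity analysis \dots\ established elsewhere in Section~3''---is circular: Proposition~\ref{mon_beta} (the log-concavity statement) explicitly relies on Theorem~\ref{es_uni}. Your second option, a ``maximum-principle argument'' starting from $w'(\sigma)=-\beta w(\sigma)<0$, is not spelled out, and knowing only the sign of $w'$ at the right endpoint does not by itself exclude an interior zero of $w'$ on the unbounded interval $(-\infty,\sigma)$. The paper's argument is the clean one: from the equation one has $\bigl(w'\phi_1\bigr)'=-\lambda_1(\sigma)\,\phi_1 w<0$, so $w'\phi_1$ is strictly decreasing on $I_\sigma$; one then shows $\lim_{t\to-\infty}w'(t)e^{-t^2/2}=0$ (if the limit were nonzero, $(w')^2e^{-t^2/2}$ would not be integrable near $-\infty$, contradicting $w\in H^1(I_\sigma,\phi_1)$), and these two facts together force $w'\phi_1<0$, hence $w'<0$, on all of $I_\sigma$. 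You should supply this step rather than defer it.
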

 \begin{proof}
  Let $w(t)$ be a positive eigenfunction of problem \eqref{uni1} corresponding to  $\lambda_1(\sigma)$. Then it solves 
 \begin{equation}
  \label{uni_dx}
  \left\{
    \begin{array}{ll}
      -\left(w' \phi_1(t)\right)'=\lambda_1(\sigma) \phi_1(t) w
      &  \,\, t \in I_{\sigma},\\[.3cm] 
      w'(\sigma) +\beta w(\sigma)=0.
    \end{array}
  \right.
\end{equation}
Since $w>0$ in $I_{\sigma}$, we clearly have  that
\begin{equation}
  \label{<}
\left(w' \phi_1(t)\right)' < 0, \,\,\,\, t \in I_{\sigma}.
\end{equation}
We claim that
 \begin{equation}
  \label{claim}
\lim_{t \to -\infty}w'(t)e^{-\frac{t^2}{2}}=0.
\end{equation}
Note that, by \eqref{<}, such a limit exists. Now assume, by absurd, that
 $$
\lim_{t \to -\infty}w'(t)e^{-\frac{t^2}{2}}=L \in (- \infty, + \infty ] \setminus \{0\}.
$$
This would imply that
$$
\int_{-\infty}^{\sigma} (w'(t))^2 e^{-\frac{t^2}{2}}\,dt  = + \infty ,
$$
an absurd, since $w \in H^{1}(I_{\sigma}, \phi_{1})$. The claim \eqref{claim} is therefore proved.

From \eqref{claim}  and \eqref{<} it follows that  $w'(t) < 0 $ in $I_{\sigma}$.
 Defining $u(x)=w(x_1)$, $x \in S_{\sigma}$, by Theorem \ref{car} and the simplicity of the first eigenvalue, we get that $\lambda_1(\sigma)= \lambda_1(S_{\sigma})$.

 This concludes the proof of the Theorem.   
 \end{proof}
Let $w(t)$ as in Theorem \ref{es_uni} and let us define the following function
 \begin{equation}
 \label{beta}
 \beta(t)=\displaystyle -\frac{w'(t)}{w(t)}, \,\,\,\,t \in I_{\sigma},
 \end{equation}
 then it holds
 \begin{prop}
 \label{mon_beta}
 The function $\beta(t)$ defined in \eqref{beta} is positive, strictly increasing in $I_{\sigma}$ and $\beta(\sigma)=\beta$.
 \end{prop}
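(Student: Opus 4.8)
The plan is to prove the three assertions about $\beta(t) = -w'(t)/w(t)$ in the natural order: positivity first (which is essentially immediate from Theorem \ref{es_uni}), then monotonicity (the substantive analytic point), then the boundary value (a direct reading of the Robin condition). For positivity, recall that Theorem \ref{es_uni} gives $w>0$ and $w'<0$ throughout $I_\sigma$, so $\beta(t) = -w'(t)/w(t) > 0$ on $I_\sigma$ with no further work required.

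The heart of the matter is strict monotonicity, and the key observation is that $\beta(t)$ increasing is equivalent to the \emph{log-concavity} of $w$, since $\beta(t) = -(\log w(t))'$. The cleanest route is to derive a first-order ODE (a Riccati equation) for $\beta(t)$ directly from \eqref{uni1}. Starting from $-w''+tw'=\lambda_1(\sigma)w$ and writing $w' = -\beta w$, I compute $w'' = -\beta' w - \beta w' = -\beta' w + \beta^2 w$; substituting into the equation and dividing by $w>0$ yields
\begin{equation}
\label{riccati}
\beta'(t) = \beta^2(t) - t\,\beta(t) - \lambda_1(\sigma), \qquad t \in I_\sigma.
\end{equation}
The plan is then to show $\beta'(t)>0$ on all of $I_\sigma$. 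The main obstacle is that \eqref{riccati} alone does not force the sign of $\beta'$ pointwise — one must rule out the possibility that $\beta'$ vanishes or becomes negative somewhere. Here the asymptotic behavior \eqref{asint} is indispensable: since $w \propto t^{\lambda_1(\sigma)}(1+O(t^{-2}))$ as $t\to-\infty$, differentiating gives $w'(t) \propto \lambda_1(\sigma)\,t^{\lambda_1(\sigma)-1}(1+O(t^{-2}))$, so that
\begin{equation*}
\beta(t) = -\frac{w'(t)}{w(t)} = -\frac{\lambda_1(\sigma)}{t}\bigl(1+O(t^{-2})\bigr) \longrightarrow 0^+ \quad\text{as } t\to-\infty,
\end{equation*}
and moreover $\beta'(t)\to 0$ as $t\to-\infty$. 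This pins down the behavior of $\beta$ at the left endpoint and is exactly the ``crucial point'' flagged in the introduction.

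To convert the Riccati relation plus the asymptotics into strict monotonicity, I would argue as follows. Differentiating \eqref{riccati} gives $\beta''=(2\beta-t)\beta'-\beta$. Suppose, for contradiction, that $\beta'$ is not everywhere positive; since $\beta'(t)\to 0^+$ at $-\infty$ (one checks the sign is positive for $t$ near $-\infty$ from the expansion), let $t_0$ be the first point where $\beta'(t_0)=0$. At such a point the equation for $\beta''$ reduces to $\beta''(t_0) = -\beta(t_0) < 0$, because $\beta>0$ by the positivity already established. Thus at every candidate zero of $\beta'$ coming from the left, $\beta'$ is strictly decreasing, which is incompatible with $\beta'$ touching zero from above; this contradiction forces $\beta'(t)>0$ throughout $I_\sigma$, giving strict monotonicity. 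The delicate part of this step is justifying the sign of $\beta'$ near $-\infty$ rigorously from \eqref{asint} and handling the first-touching-point argument carefully; this is where I expect to spend the most care.

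Finally, the boundary value $\beta(\sigma)=\beta$ is read off directly from the Robin condition in \eqref{uni1}: since $w'(\sigma)+\beta w(\sigma)=0$ and $w(\sigma)\neq 0$ (indeed $w(\sigma)>0$ because $w>0$ on $I_\sigma$ and $w$ is continuous up to the boundary), we obtain $\beta(\sigma) = -w'(\sigma)/w(\sigma) = \beta$. This completes the proof, with the Riccati equation \eqref{riccati} together with the asymptotic normalization \eqref{asint} doing all the real work.
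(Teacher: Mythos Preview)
Your overall strategy---derive a Riccati equation for $\beta$, use the asymptotics \eqref{asint} to pin down the behavior at $-\infty$, and then run a first-touching argument on $\beta'$---is exactly the paper's approach. However, there is a sign error in your Riccati equation that breaks the argument as written. Substituting $w'=-\beta w$ and $w''=-\beta' w+\beta^2 w$ into $-w''+tw'=\lambda_1(\sigma)w$ gives
\[
\beta'(t)=\beta^2(t)+t\,\beta(t)+\lambda_1(\sigma),
\]
not $\beta'=\beta^2-t\beta-\lambda_1(\sigma)$ (this matches the paper's computation $\beta'=t\beta+\lambda+\beta^2$). Consequently, differentiating yields $\beta''=(2\beta+t)\beta'+\beta$, so at a first zero $t_0$ of $\beta'$ one gets $\beta''(t_0)=+\beta(t_0)>0$, not $-\beta(t_0)<0$.

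This sign matters for the logic of your contradiction. With your (incorrect) conclusion $\beta''(t_0)<0$, there is \emph{no} contradiction: if $\beta'$ is positive on $(-\infty,t_0)$ and vanishes at $t_0$, then the left difference quotients force $\beta''(t_0)\le 0$, which is perfectly compatible with $\beta''(t_0)<0$---it simply means $\beta'$ crosses through zero and becomes negative. The genuine contradiction comes from the correct sign: $\beta''(t_0)=\beta(t_0)>0$ is incompatible with $\beta''(t_0)\le 0$. Once you fix the sign, your argument goes through and is equivalent to the paper's (which phrases the same step as the differential inequality $z'>z(2\beta+t)$ for $z=\beta'$, with $z\to 0$ at $-\infty$).
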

 \begin{proof}
 Being w a solution to \eqref{uni1}, Theorem \ref{es_uni} implies that $\beta >0.$ Moreover, since $w'$ verifies in $I_{\sigma}$ the following equation
\begin{equation}
\label{w'}
-(w')''+t(w')'= \left(\lambda(\sigma)-1\right) w', 
\end{equation}
taking into account the asymptotic behavior given in \eqref{asint},  we have that  
  \begin{equation}
  \label{lim}
  \lim_{t \to -\infty}\beta(t)=0.
  \end{equation}
Moreover we have that
 
  \[
  \beta'(t)=\displaystyle \frac{-w''w+(w')^2}{w^2} \qquad t \in I_{\sigma}.
  \] 
Since $w''$ is an eigenfunction of the one-dimensional  Hermite problem corresponding to the eigenvalue 
  $\lambda(\sigma)-2$, again \eqref{lim} yields 
    \begin{equation}
  \label{lim2}
  \lim_{t \to -\infty}\beta'(t)=0.
  \end{equation}
  Moreover by \eqref{uni1} it holds
  \[
  \beta'(t)= \displaystyle \frac{-tww' +(w')^2+\lambda(\sigma) w^2}{w^2}= t\beta(t) +\lambda(\sigma)+ \beta^2(t), \qquad t \in I_{\sigma}.
  \]
  Hence denoted by $z=\beta'$ we have
  \[
  z' =tz+\beta(t) +2\beta(t) z> z(2\beta(t)+t),  
  \]
  with $\lim_{t\to -\infty } z=0$. Then  $z> 0$ that is, $\beta(t)$ is  strictly increasing and this completes the proof.
 \end{proof}
 \begin{rem}
 We observe that Theorem \ref{es_uni} implies that any positive eigenfunction of  problem \eqref{uni1} is log-concave and  the same clearly, also true for problem \eqref{semi}. Indeed if we consider $$f(t)=\log(w(t)), \quad t \in I_{\sigma}$$ then $$f'(t)=-\beta(t) \text{ and } f''(t)=-\beta'(t)<0.$$ 
 \end{rem}
 Let $\lambda_1(\sigma)$ be  as in \eqref{aut}.  Proposition \ref{mon_beta} implies the following monotonicity result for the first eigenvalue $\lambda_1(\sigma)$ of the half-spaces.
 \begin{prop}
Let $r,\,\sigma \in \mathbb R$ such that $r\le \sigma$. Then $\lambda_1(r)\ge\lambda_1(\sigma)$. 
 \end{prop}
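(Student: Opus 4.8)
The plan is to combine the strict monotonicity of $\beta(t)$ from Proposition \ref{mon_beta} with the (elementary) monotonicity of the first eigenvalue with respect to the Robin constant. It is convenient to denote momentarily by $\lambda_1^{b}(r)$ the first eigenvalue of the one-dimensional problem on $I_r=(-\infty,r)$ carrying the Robin parameter $b>0$ at the endpoint $r$, so that in this notation $\lambda_1(r)=\lambda_1^{\beta}(r)$ and $\lambda_1(\sigma)=\lambda_1^{\beta}(\sigma)$. Fix $r\le\sigma$ and let $w$ be the positive first eigenfunction on $I_\sigma$ associated with $\lambda_1(\sigma)$, normalized as in Theorem \ref{es_uni}; thus $w>0$, $w'<0$, and $\beta(t)=-w'(t)/w(t)$ is strictly increasing on $I_\sigma$ with $\beta(\sigma)=\beta$.

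First I would restrict $w$ to the smaller interval $I_r\subseteq I_\sigma$. The equation $-w''+tw'=\lambda_1(\sigma)w$ holds on all of $I_\sigma$, hence in particular on $I_r$; moreover $w|_{I_r}>0$ and, since $w\in H^1(I_\sigma,\phi_1)$ and $I_r\subseteq I_\sigma$, also $w|_{I_r}\in H^1(I_r,\phi_1)$. By the very definition of $\beta(\cdot)$, at the endpoint $r$ the restricted function satisfies the Robin condition $w'(r)+\tilde\beta\,w(r)=0$ with $\tilde\beta:=\beta(r)$. Therefore $w|_{I_r}$ is a positive eigenfunction of the problem on $I_r$ with Robin parameter $\tilde\beta$ and eigenvalue $\lambda_1(\sigma)$, and Remark \ref{car} (applied in dimension one) identifies $\lambda_1(\sigma)$ as the corresponding first eigenvalue, i.e. $\lambda_1^{\tilde\beta}(r)=\lambda_1(\sigma)$.

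It then remains to compare $\lambda_1^{\tilde\beta}(r)$ and $\lambda_1^{\beta}(r)$. Proposition \ref{mon_beta} gives $\tilde\beta=\beta(r)\le\beta(\sigma)=\beta$, because $\beta(\cdot)$ is increasing and $r\le\sigma$. The inequality $\lambda_1^{\tilde\beta}(r)\le\lambda_1^{\beta}(r)$ is immediate from the variational characterization \eqref{aut}: for each fixed admissible test function the boundary term $b\,v(r)^2 e^{-r^2/2}$ is non-decreasing in $b$, so the whole Rayleigh quotient is non-decreasing in $b$, and passing to the infimum preserves monotonicity. Combining the two facts yields $\lambda_1(\sigma)=\lambda_1^{\tilde\beta}(r)\le\lambda_1^{\beta}(r)=\lambda_1(r)$, which is the claim.

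The only genuinely delicate step is the identification in the second paragraph: one must be sure that the restricted function is an admissible competitor and that the positivity/simplicity results (Theorem \ref{esi} and Remark \ref{car}) legitimately let one read off $\lambda_1(\sigma)$ as the first eigenvalue of a modified Robin problem on $I_r$. This hinges on the qualitative properties of $w$ supplied by Theorem \ref{es_uni} (positivity, strict monotonicity, and the correct decay at $-\infty$ guaranteeing $w|_{I_r}\in H^1(I_r,\phi_1)$). Once this is granted, the remaining monotonicity in the Robin parameter is entirely routine.
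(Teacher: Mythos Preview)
Your proof is correct and follows essentially the same approach as the paper: restrict the first eigenfunction $w$ of the problem on $I_\sigma$ to $I_r$, identify it (via Remark~\ref{car}) as the first eigenfunction on $I_r$ for the Robin parameter $\beta(r)$, and then use the monotonicity of the first eigenvalue in the Robin parameter together with $\beta(r)\le\beta$ from Proposition~\ref{mon_beta}. The paper carries this out directly at the level of the Rayleigh quotient rather than introducing the auxiliary notation $\lambda_1^{b}(r)$, but the argument is identical.
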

 \begin{proof}
 Let $\beta(r)$ be the function defined in \eqref{beta} and $\beta$ the parameter which appears in the Robin boundary condition in \eqref{uni1}, we have
 \begin{gather}
   \begin{split}
\lambda_1(\sigma)&=\displaystyle \frac{\dint_{-\infty}^{r} (w'(t))^2 e^{-\frac{t^2}{2}}\,dt + \beta(r) (w(r))^2 e^{-\frac{r^2}{2}}}{\dint_{-\infty}^{r} (w'(t))^2 e^{-\frac{t^2}{2}}\,dt}\\&
=
\min_{v \in H^1\left(I_{\sigma},\phi_1(t)\right)\setminus \{0\}}\displaystyle \frac{\dint_{-\infty}^{r} (v'(t))^2 e^{-\frac{t^2}{2}}\,dt + \beta(r) (v(r))^2 e^{-\frac{r^2}{2}}}{\dint_{-\infty}^{r} (v'(t))^2 e^{-\frac{t^2}{2}}\,dt}\\&
\le
 \min_{v \in H^1\left(I_{\sigma},\phi_1(t)\right)\setminus \{0\}}\displaystyle \frac{\dint_{-\infty}^{r} (v'(t))^2 e^{-\frac{t^2}{2}}\,dt + \beta \,\,(v(r))^2 e^{-\frac{r^2}{2}}}{\dint_{-\infty}^{r} (v'(t))^2 e^{-\frac{t^2}{2}}\,dt}\\ &
=
\lambda_1(r),
\end{split}
\end{gather}
where the last inequality follows by Proposition \ref{mon_beta}.
 \end{proof}
\section{A representation formula for $\lambda_1(\Omega)$}
\label{rap}
Let $\Omega \in \mathcal G$, with $\mathcal G$ as in Definition \ref{omega} and let  $u$ be the first positive eigenfunction of \eqref{eq:2} such that $\| u\|_{L^2(\Omega,\phi_N)}=1$. From now on, for every $t\in[0,1]$, we will use the following notation
\begin{equation*}
	\begin{array}{l}
		U_{t} =\{x\in \Omega\colon u>t\},\\[.1cm]
		\partial U^{\text{int}}_{t} =\{x\in \Omega\colon u=t\},\\[.1cm]
		\partial U^{\text{ext}}_{t}=\{x\in \de\Omega\colon  u>t\}.
	\end{array}
\end{equation*}
Let $\psi\in L^2(\Omega,\phi_N)$ be a non-negative function and let us consider the following functional
\begin{equation}
\label{f}
\f_{\Omega}(U_{t},\psi)=\frac{1}{\gamma_N(U_{t})}\left(
-\int_{U_{t}} \psi^{2} \phi_N\,dx + \int_{ \partial U^{\text{int}}_{t}} 
\psi \phi_N\,d\mathcal H^{N-1}(x) +
\beta \int_{ \partial U^{\text{ext}}_{t}} \phi_N\,d\mathcal H^{N-1}(x)\right).
\end{equation} 
The following  level set representation formula for $\lambda_1(\Omega)$ holds.
\begin{theo}
\label{theoform}
Let $\Omega \in \mathcal G$, with $\mathcal G$ as in Definition \ref{omega}  and  let $u$ be the positive minimizer of \eqref{eq:2} such that $\| u\|_{L^2(\Omega,\phi_N)}=1$. Then,
for a.e. $t>0$, it holds true 
\begin{equation}
\label{tesitest}
\lambda_1(\Omega)=\f_{\Omega}\left(U_{t}, \bar \psi \right),
\end{equation}
where $\bar \psi = \displaystyle\frac{|\nabla u|}{ u}$ and $\f_{\Omega}$ is defined in \eqref{f}.
\end{theo}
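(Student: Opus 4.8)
The plan is to obtain \eqref{tesitest} by applying the (weighted) divergence theorem on the superlevel set $U_t$ to the logarithmic gradient field $\phi_N \nabla u / u$, which is well defined since $u>0$ in $\Omega$ by Theorem \ref{esi}. The point of this particular field is that its divergence is controlled purely by the eigenvalue equation and by $\bar\psi=|\nabla u|/u$, while its boundary flux reproduces exactly the three terms appearing in $\f_\Omega$.

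First I would compute the divergence of the field. Using the equation $-\divergenza(\phi_N\nabla u)=\lambda_1(\Omega)\phi_N u$ from \eqref{eq:2}, the product rule gives
\begin{equation*}
\divergenza\!\left(\frac{\phi_N \nabla u}{u}\right)=\frac{\divergenza(\phi_N\nabla u)}{u}-\frac{\phi_N|\nabla u|^2}{u^2}=-\lambda_1(\Omega)\phi_N-\bar\psi^2\phi_N .
\end{equation*}
Integrating over $U_t$, the left-hand side equals the boundary flux $\int_{\partial U_t}\frac{\phi_N}{u}\,\nabla u\cdot n\,d\mathcal H^{N-1}$, where $n$ is the outer normal to $U_t$, while the right-hand side equals $-\lambda_1(\Omega)\gamma_N(U_t)-\int_{U_t}\bar\psi^2\phi_N\,dx$. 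Then I would identify the flux on the two pieces of $\partial U_t$. On $\partial U^{\text{int}}_t=\{u=t\}$ the outer normal is $n=-\nabla u/|\nabla u|$, so $\frac{\phi_N}{u}\nabla u\cdot n=-\frac{|\nabla u|}{t}\phi_N=-\bar\psi\,\phi_N$ (using $u=t$ there); on $\partial U^{\text{ext}}_t=\{u>t\}\cap\partial\Omega$ the outer normal is $\nu$ and the Robin condition $\partial u/\partial\nu=-\beta u$ yields $\frac{\phi_N}{u}\nabla u\cdot\nu=-\beta\phi_N$. Equating the two expressions and rearranging gives
\begin{equation*}
\lambda_1(\Omega)\gamma_N(U_t)=-\int_{U_t}\bar\psi^2\phi_N\,dx+\int_{\partial U^{\text{int}}_t}\bar\psi\,\phi_N\,d\mathcal H^{N-1}+\beta\int_{\partial U^{\text{ext}}_t}\phi_N\,d\mathcal H^{N-1},
\end{equation*}
which is precisely \eqref{tesitest} after dividing by $\gamma_N(U_t)$.

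The main obstacle is making the divergence theorem rigorous, and here two issues require care. The first explains the restriction to a.e.\ $t$: by Sard's theorem almost every $t>0$ is a regular value of the smooth function $u$, so that $\{u=t\}$ is an $(N-1)$-rectifiable hypersurface with $|\nabla u|\neq0$, the trace of $\bar\psi$ is well defined, the intersection $\{u=t\}\cap\partial\Omega$ is negligible, and the coarea formula applies; for such $t$ the decomposition $\partial U_t=\partial U^{\text{int}}_t\cup\partial U^{\text{ext}}_t$ is clean. The second, and genuinely delicate, point is that $\Omega$ (hence $U_t$) may be unbounded, so I would not apply the divergence theorem directly but rather on $U_t\cap B_R$ and let $R\to\infty$, showing that the flux of $\phi_N\nabla u/u$ across $\partial B_R\cap U_t$ vanishes in the limit. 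This is exactly where the Gaussian weight enters: the rapid decay of $\phi_N$, combined with $u\in H^1(\Omega,\phi_N)$ and the embedding properties guaranteed by $\Omega\in\mathcal G$ (Definition \ref{omega}), provides the integrability needed to kill the flux at infinity and to justify all the integrations above.
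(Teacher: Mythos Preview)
Your argument is correct and follows essentially the same approach as the paper: divide the equation by $u$, integrate over $U_t$, and apply the divergence theorem to the field $\phi_N\nabla u/u$, splitting the boundary into $\partial U_t^{\text{int}}$ and $\partial U_t^{\text{ext}}$ and using the Robin condition on the latter. The paper's proof is in fact terser than yours, as it does not spell out the Sard-type justification for the a.e.\ restriction on $t$ or the truncation argument for unbounded $\Omega$; your additional care on these points is welcome and does not deviate from the intended strategy.
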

\begin{proof}
Being $u$ smooth and positive in $\Omega$, we can divide both terms in the equation in \eqref{eq:2} by $ u$, and integrate on $U_{t}$. Then,  by the boundary condition we get
\begin{multline}
\label{formal}
\begin{split}
\lambda(\Omega) \gamma_N(U_{t}) &=\displaystyle\int_{U_t}\displaystyle\frac{-\divergenza\left(\phi_N(x)\nabla u)\right)}{u}\,dx=-\displaystyle\int_{\partial U_t}\frac{\partial u}{\partial \nu} \frac{ 1}{u}\phi_N(x)\, d\mathcal H^{N-1}(x)-\int_{U_t}\displaystyle\frac{|\nabla u|^2}{u^2} \phi_N(x) \,dx\\&=\displaystyle\int_{\partial U^{\text{int}}_{t}}\frac{ |\nabla u|}{u}\phi_N(x)\, d\mathcal H^{N-1}(x)+\beta \displaystyle\int_{\partial U^{\text{ext}}_{t}}\phi_N(x)\, d\mathcal H^{N-1}(x)-\int_{U_t}\displaystyle\frac{|\nabla u|^2}{u^2} \phi_N(x) \,dx	\\&=\displaystyle\f_{\Omega}\left(U_{t}, \bar \psi\right),
\end{split}
\end{multline}
and this concludes the proof.
\end{proof}
\begin{theo}
\label{teotesithm}
Let $\Omega \in \mathcal G$, with $\mathcal G$ as in Definition \ref{omega}  and let $\bar\psi$  as in Theorem \ref{theoform}.  Let $\psi\in  L^2(\Omega,\phi_N)$ be a nonnegative function such that $\psi\not\equiv \bar \psi$ and  let $\mathcal F_{\Omega}$ be as in \eqref{f}. Set
\[
	w(x):=\psi-\bar \psi,\qquad I(t):=				\int_{U_{t}} w\bar\psi\phi_N\,dx,
	\] 
	then $I\colon ]0,+\infty[\rightarrow \mathbb R$ is locally absolutely continuous and
	\begin{equation}
	\label{firstineq}
	\mathcal F_{\Omega}(U_{t},\psi)\le\lambda_{1}(\Omega)-\frac{1}{\gamma_N(U_{t})}\frac{1}{t} \Big(\frac{d}{dt}t^{2} I(t) \Big),
	\end{equation}
	for almost every $t>0$. 
\end{theo}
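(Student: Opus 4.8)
The plan is to compute the difference $\f_{\Omega}(U_t,\psi)-\lambda_1(\Omega)$ head-on, using the representation formula of Theorem \ref{theoform} in the form $\lambda_1(\Omega)=\f_{\Omega}(U_t,\bar\psi)$, and then to recognize the surviving boundary term as a $t$-derivative through the coarea formula. First I would subtract the two functionals and observe that the Robin boundary integrals over $\partial U^{\text{ext}}_t$ cancel, since they are independent of the first argument, leaving
\[
\f_{\Omega}(U_t,\psi)-\lambda_1(\Omega)=\frac{1}{\gamma_N(U_t)}\left(-\int_{U_t}(\psi^2-\bar\psi^2)\phi_N\,dx+\int_{\partial U^{\text{int}}_t}(\psi-\bar\psi)\phi_N\,d\mathcal H^{N-1}(x)\right).
\]
With $w=\psi-\bar\psi$, the pointwise identity $\psi^2-\bar\psi^2=w^2+2w\bar\psi$ rewrites the bulk integrand, so that
\[
\f_{\Omega}(U_t,\psi)-\lambda_1(\Omega)=\frac{1}{\gamma_N(U_t)}\left(-\int_{U_t}w^2\phi_N\,dx-2I(t)+\int_{\partial U^{\text{int}}_t}w\phi_N\,d\mathcal H^{N-1}(x)\right).
\]
Since $w^2\ge0$, the term $-\int_{U_t}w^2\phi_N\,dx$ is nonpositive and may be discarded; this single step is the only place where equality is relaxed to an inequality.

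It then remains to identify the boundary integral as $-t\,I'(t)$. To this end I would differentiate $I(t)=\int_{U_t}w\bar\psi\phi_N\,dx$ by means of the coarea formula, which represents $I$ as the tail integral $I(t)=\int_t^{+\infty}\big(\int_{\{u=s\}}\frac{w\bar\psi}{|\nabla u|}\phi_N\,d\mathcal H^{N-1}\big)\,ds$ and gives, for almost every $t$, $I'(t)=-\int_{\partial U^{\text{int}}_t}\frac{w\bar\psi}{|\nabla u|}\phi_N\,d\mathcal H^{N-1}(x)$. On the level set $\partial U^{\text{int}}_t=\{u=t\}$ one has $\bar\psi=|\nabla u|/u=|\nabla u|/t$, hence $\bar\psi/|\nabla u|=1/t$ there, and therefore $\int_{\partial U^{\text{int}}_t}w\phi_N\,d\mathcal H^{N-1}(x)=-t\,I'(t)$. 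Substituting this and using the elementary identity $-2I(t)-t\,I'(t)=-\frac1t\frac{d}{dt}\big(t^2I(t)\big)$ yields precisely \eqref{firstineq}.

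The main obstacle is not the algebra but the regularity bookkeeping justifying the coarea differentiation for a.e.\ $t$ and the local absolute continuity of $I$. For this I would first fix $t>0$: on $U_t$ the positivity and smoothness of $u$ from Theorem \ref{esi} give $u>t$, whence $\bar\psi=|\nabla u|/u\le|\nabla u|/t$, so $\bar\psi\in L^2(U_t,\phi_N)$ because $u\in H^1(\Omega,\phi_N)$; combined with $\psi\in L^2(\Omega,\phi_N)$ this makes $w\bar\psi\phi_N\in L^1(U_t)$ and $I(t)$ finite. By Sard's theorem almost every $t>0$ is a regular value of $u$, so $\partial U^{\text{int}}_t$ is a smooth hypersurface on which $|\nabla u|\neq0$; the coarea representation of $I$ then exhibits it as an integral of an $L^1_{\mathrm{loc}}$ density in $s$, giving both the local absolute continuity of $I$ and the formula for $I'(t)$ used above. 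Finally, dividing by $t$ and by $\gamma_N(U_t)$ is harmless for $t>0$, since $u>0$ guarantees $\gamma_N(U_t)>0$ on the relevant range.
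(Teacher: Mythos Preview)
Your proof is correct and follows essentially the same route as the paper: subtract $\f_\Omega(U_t,\bar\psi)=\lambda_1(\Omega)$ from $\f_\Omega(U_t,\psi)$, drop the nonnegative $w^2$ term, and use the coarea formula together with $\bar\psi/|\nabla u|=1/u=1/t$ on $\{u=t\}$ to recognize the remaining expression as $-\tfrac{1}{t}\tfrac{d}{dt}(t^2 I(t))$. One tiny wording slip: the Robin integral over $\partial U_t^{\text{ext}}$ cancels because it is independent of the \emph{second} argument $\psi$, not the first; and your additional regularity discussion (Sard, local $L^1$ density) makes explicit what the paper leaves implicit.
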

\begin{proof}
In order to prove \eqref{firstineq}, writing the representation formula \eqref{tesitest} in terms of $w$, it follows that, for a.e. $t>0$,
\begin{equation}
\label{secondineq}
\begin{split}
\mathcal F_{\Omega}(U_{t},\psi)&= \lambda_{1}(\Omega)+\frac{1}{\gamma_N(U_{t})}
\left(
	\int_{\partial U_t^{\text{int}}} w\phi_N\,d\mathcal H^{N-1}(x)-\int_{U_{t}}
	\Big(\psi^{2}-\bar \psi^2\Big)\phi_N\,dx
\right)\\
&\le 
\lambda_{1}(\Omega)+\frac{1}{\gamma_N(U_{t})}
\left(
	\int_{\partial U_t^{\text{int}}} w\phi_N\,d\mathcal H^{N-1}(x)-2\int_{U_{t}}
	 w\bar\psi\phi_N \,dx
\right)\\
&=
\lambda_{1}(\Omega)+\frac{1}{\gamma_N(U_{t})}
\left(
	\int_{\partial U_t^{\text{int}}} w\phi_N\,d\mathcal H^{N-1}(x)-2\, I(t)
\right)
\end{split}
\end{equation}
where the inequality in \eqref{secondineq} follows since $\psi,\bar \psi\ge 0$. Applying the coarea formula, it is possible to rewrite $I(t)$ as
\[
	I(t)= \int_{U_{t}}w\bar\psi\phi_N\,dx = 
	\int_{t}^{+\infty}\frac{1}{\tau}d\tau \int_{\partial U_{\tau}^{\text{int}}} w\,\phi_N \,d\mathcal H^{N-1}(x).
\]
This assures that $I(t)$ is locally absolutely continuous in $]0,+\infty[$ and, for almost every $t>0$ we have
\[
-\frac{d}{dt}\big( t^2I(t) \big)=t \left(\int_{\partial U_t^{\text{int}}}w\,\phi_N\,d\mathcal H^{N-1}(x) -2I(t)\right).
\] 
Substituting in \eqref{secondineq}, the inequality \eqref{firstineq} follows. 
\end{proof}
\begin{theo}
\label{teotesithm}
Let $\Omega \in \mathcal G$, with $\mathcal G$ as in Definition \ref{omega},  and let $\bar\psi$  be as in Theorem \ref{theoform}.  Let $\psi\in L^2(\Omega,\phi_N)$ be a nonnegative function such that $\psi\not\equiv \bar \psi$ and  let $\mathcal F_{\Omega}$ be as in \eqref{f}.
Then there exists a set $T\subset ]0,+\infty[$ with positive Lebesgue measure such that for every $t\in T$ it holds that
\begin{equation}
	\label{cv}
	\lambda_1(\Omega)\ge\mathcal F_{\Omega}(U_{t},\psi).
\end{equation}
\end{theo}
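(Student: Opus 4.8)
The plan is to argue by contradiction and to turn the differential inequality \eqref{firstineq} into a monotonicity statement for the function $G(t):=t^{2}I(t)$. Suppose the claim were false, i.e.\ that $\{t>0:\lambda_{1}(\Omega)\ge\mathcal F_{\Omega}(U_{t},\psi)\}$ is Lebesgue--negligible, so that $\mathcal F_{\Omega}(U_{t},\psi)>\lambda_{1}(\Omega)$ for a.e.\ $t>0$. Since $\gamma_N(U_{t})>0$ and $t>0$, \eqref{firstineq} then forces $\frac{d}{dt}\big(t^{2}I(t)\big)<0$ for a.e.\ $t>0$; as $I$ is locally absolutely continuous, $G$ is strictly decreasing on $(0,+\infty)$. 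The entire argument thus reduces to showing that a strictly decreasing $G$ is incompatible with the boundary behavior of $G$ at $0^{+}$ and at $+\infty$, which I would establish by two separate tail estimates.

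To compute $\lim_{t\to0^{+}}G(t)$ I would write $I(t)=\int_{U_{t}}\psi\bar\psi\,\phi_N\,dx-\int_{U_{t}}\bar\psi^{2}\,\phi_N\,dx$ and treat the two terms separately. On $U_{t}$ one has $u>t$, hence $\int_{U_{t}}\bar\psi^{2}\phi_N\,dx\le t^{-2}\int_{\Omega}|\nabla u|^{2}\phi_N\,dx=:t^{-2}C$ with $C<\infty$ because $u\in H^{1}(\Omega,\phi_N)$; by Cauchy--Schwarz the first term obeys $t^{2}\int_{U_{t}}\psi\bar\psi\,\phi_N\,dx\le t\sqrt{C}\,\|\psi\|_{L^{2}(\Omega,\phi_N)}\to0$. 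For the second term I would use the coarea representation
\[
\int_{U_{t}}\bar\psi^{2}\phi_N\,dx=\int_{t}^{+\infty}\frac{h(\tau)}{\tau^{2}}\,d\tau,\qquad h(\tau):=\int_{\partial U_{\tau}^{\text{int}}}|\nabla u|\,\phi_N\,d\mathcal H^{N-1}(x),
\]
where $\int_{0}^{+\infty}h=\int_{\Omega}|\nabla u|^{2}\phi_N\,dx<\infty$. Then $t^{2}\int_{U_{t}}\bar\psi^{2}\phi_N\,dx=\int_{t}^{+\infty}(t/\tau)^{2}h(\tau)\,d\tau\to0$ by dominated convergence, since $(t/\tau)^{2}\le1$ on $\{\tau>t\}$ and vanishes pointwise as $t\to0^{+}$. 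Hence $\lim_{t\to0^{+}}G(t)=0$.

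For the behavior as $t\to+\infty$ the decisive point is the sign of $\psi$ and $\bar\psi$: since both are nonnegative, $\int_{U_{t}}\psi\bar\psi\,\phi_N\,dx\ge0$, so that $G(t)\ge-t^{2}\int_{U_{t}}\bar\psi^{2}\phi_N\,dx\ge-\int_{t}^{+\infty}h(\tau)\,d\tau$, again via $(t/\tau)^{2}\le1$. Because $h\in L^{1}(0,+\infty)$ the right--hand side tends to $0$, whence $\liminf_{t\to+\infty}G(t)\ge0$. This is exactly the place where the unboundedness of the first eigenfunction (recalled in the Introduction) has to be dealt with: one cannot invoke $U_{t}=\emptyset$ for large $t$, and the nonnegativity of $\psi\bar\psi$ combined with the integrability of $h$ is what replaces that simple observation.

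Combining the two limits gives the contradiction. A strictly decreasing $G$ with $\lim_{t\to0^{+}}G(t)=0$ satisfies $G(t_{0})<0$ for every $t_{0}>0$, and monotonicity yields $\lim_{t\to+\infty}G(t)\le G(t_{0})<0$, which conflicts with $\liminf_{t\to+\infty}G(t)\ge0$. Therefore $\frac{d}{dt}\big(t^{2}I(t)\big)<0$ cannot hold for a.e.\ $t$, so the set $T=\{t>0:\lambda_{1}(\Omega)\ge\mathcal F_{\Omega}(U_{t},\psi)\}$ must have positive measure, proving the statement. I expect the main difficulty to lie precisely in making the two tail estimates rigorous for a possibly unbounded $u$ --- justifying the coarea identities and the dominated--convergence passage, and checking that every integral is controlled uniformly in $t$ near both endpoints --- rather than in the elementary monotonicity contradiction itself.
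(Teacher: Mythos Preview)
Your argument is correct and in fact cleaner than the route taken in the paper. Both proofs argue by contradiction and use inequality \eqref{firstineq} to deduce that $G(t)=t^{2}I(t)$ would have to be strictly decreasing, and then derive a contradiction from the behavior of $G$ at the two endpoints. The difference lies in how the tail estimates are obtained. The paper first disposes of the case $u\in L^{\infty}(\Omega)$ by quoting \cite{da06,bd10}, and for unbounded $u$ replaces $\psi$ by a sequence $\psi_{n}\in C^{\infty}_{c}(\Omega)$ converging to $\psi$ in $L^{2}(\Omega,\phi_N)$; the compact support of $\psi_{n}$ is what forces $\int_{U_{t}\cap\mathrm{supp}\,\psi_{n}}\psi_{n}u|\nabla u|\phi_N\,dx\to 0$ as $t\to+\infty$, while the estimate at $t\to0^{+}$ uses the lower bound $m=\min_{\overline{\Omega}}u>0$. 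You instead work directly with $\psi$ and exploit the coarea representation $\int_{U_{t}}\bar\psi^{2}\phi_N=\int_{t}^{\infty}\tau^{-2}h(\tau)\,d\tau$ together with $h\in L^{1}(0,\infty)$: dominated convergence gives $G(t)\to0$ as $t\to0^{+}$, and the nonnegativity of $\psi\bar\psi$ yields $G(t)\ge-\int_{t}^{\infty}h(\tau)\,d\tau\to0$, so $\liminf_{t\to\infty}G(t)\ge0$. This avoids the approximation step, the Fatou/subsequence passage, and any appeal to a positive infimum of $u$, and it treats the bounded and unbounded cases uniformly (when $u$ is bounded with supremum $M$, your same inequalities give $\liminf_{t\to M^{-}}G(t)\ge0$ and the contradiction goes through on $(0,M)$). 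The paper's approach buys a slightly more elementary endpoint estimate once the approximation is in place; your approach buys a shorter, self-contained argument that does not need to separate cases.
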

\begin{proof}
Let $u$ be the first positive  eigenfunction of  \eqref{eq:2} such that $\|u\|_{L^2(\Omega,\phi_N)}=1$. We have to discuss two cases. If $u \in L^{\infty}(\Omega)$ then the claim follows by repeating line by line the arguments in \cite{da06} and  \cite{bd10}.  Hence,
in the proof below, we will assume that $u$ is not bounded. Note that, by the asymptotic behaviour given 
in \eqref{asint}, this case occurs surely when $\Omega$ is any half-space. 
 In order to prove \eqref{cv}, let us proceed by contradiction. Then we assume that there 
exists a nonnegative function $\psi \in L^2(\Omega,\phi_N)$ such that the reverse inequality holds, that is
\begin{equation}
\label
{ass}
\lambda_1(\Omega)< \mathcal F_{\Omega}(U_{t},\psi),
\end{equation}
for almost all $t>0$. Let $\psi_n \in C^{\infty}_c(\Omega)$ be a  sequence of nonnegative  functions  such that $\psi_n \to \psi$ in  $L^2(\Omega,\phi_N)$. Then by  Fatou's lemma we have  
\begin{equation*}
\lambda_1(\Omega ) <\mathcal F_{\Omega}(U_{t},\psi)\le\liminf_{n\to \infty}\mathcal F_{\Omega}(U_{t},\psi_n).
\end{equation*}
Let $\psi_{n_k}$ be a subsequence such that 
\[
\lim_{k \to \infty} F_{\Omega}(U_{t},\psi_{n_k})=\liminf_{n\to \infty}\mathcal F_{\Omega}(U_{t},\psi_n).
\]
In order to simplify the notation, we will still denote by $\psi_n$ such a subsequence
the subsequence. Then by \eqref{ass}  and Theorem \ref{teotesithm}   we have
\begin{equation*}
\lambda_1(\Omega )< \lim_{n\to \infty}\mathcal F_{\Omega}(U_{t},\psi_n)\le \lambda_1(\Omega )-\lim_{n\to \infty} \frac{1}{\gamma_N(U_{t})}\frac{1}{t} \Big(\frac{d}{dt}t^{2} I_n(t) \Big),
\end{equation*}
where $$I_n(t)= \int_{U_{t}}w_n\bar\psi\phi_N\,dx 
$$
with $w_n=\psi_n-\bar \psi$.
Then for $n$ large enough  it has to hold
\begin{equation}
\label{der}
\frac{d}{dt}t^{2} I_n(t)<0,
\end{equation}
almost everywhere in $]0,+\infty[$. Since
\begin{multline}
\begin{split}
\lim_{t \to +\infty} t^2 |I_n(t)|&\le\lim_{t \to +\infty}t^2 \int_{U_{t}}|\psi_n-\bar \psi|\bar\psi\phi_N\,dx\le\lim_{t \to +\infty} \int_{U_{t}}u^2|\psi_n-\bar \psi|\bar\psi\phi_N\,dx \\&\le\lim_{t \to +\infty} \int_{U_{t} \cap \text{supp}\psi_n}\psi_n u|\nabla u|\phi_N\,dx +\int_{U_{t}}|\nabla u|^2\phi_N\,dx =0,
\end{split}
\end{multline}
and then for sufficiently large  $n$ we have
\begin{equation}
\label{1}
\lim_{t \to +\infty} t^2 I_n(t)=0.
\end{equation}
On the other hand  denoted by $m=\min_{\bar\Omega}u>0$, it holds
\[
|I_n(t)|\le \int_{\Omega}|\psi_n-\bar \psi|\bar\psi\phi_N\,dx \le\frac{\gamma_N(\Omega)^{ 1/ 2}}{m} \|\psi_n\|_{L^{\infty}(\Omega \cap \text{supp}(\psi_n))}\|\nabla u\|_{L^2(\Omega,\phi_N)}+\frac{1}{m^2}\|\nabla u\|^2_{L^2(\Omega,\phi_N)}.
\]
Then  for sufficiently large  $n$ we have
\begin{equation}
\label{2}
\lim_{t \to 0^+} t^2 I_n(t) =0
\end{equation}
Since,  by \eqref{der},  the function $t^2 I_n(t)$ is strictly decreasing, equations \eqref{1} and \eqref{2} give an absurd.  This concludes the proof.
\end{proof}
\section{Proof of the main result}
In this Section we  prove the Faber-Krahn inequality stated in Theorem \ref{main}.

\begin{proof}[Proof of Theorem \ref{main}]
We  first  construct a suitable test function defined in $\Omega$ for \eqref{cv}.
Let $v$  be a positive eigenfunction to the problem \eqref{semi} in $\Omega^{\diesis}$ and $w$ the solution to \eqref{uni1} such that $v(x)=w(x_1)$. Then  if we  consider the function 
\[
	\varphi^*(r)=-\displaystyle \frac{w'(r)}{w(r)}	
	\quad\text{with } r\in I_{\sigma^{\diesis}}.
\]
By Theorem \ref{theoform}, we have 
\[
\lambda_1(\Omega^{\diesis})=\mathcal F_{\Omega^{\diesis}}(S_{r},\varphi^*) \]
As before, let $u$ be the first positive eigenfunction of \eqref{eq:2} in 
$\Omega$ such that $\|u\|_{L^2(\Omega,\phi_N)}=1$. Using the same notation of the previous Section, for any $t>0$ we consider $\mathcal S_{r(t)}$, the half-space such that $\gamma_N( U_t)=\gamma_N(S_{r(t)})$. Then, if $x\in\Omega$ and 
$u(x)=t$, we define the following test function
\[
	\psi(x):=\beta(r(t))=\psi^*(r(t)),
\]
where $\beta(r(t))$ is defined in \eqref{beta}.

We claim that the following inequality holds\begin{equation}
	\label{compar}
\mathcal F_{\Omega^{\diesis}}(S_{r(t)},\psi^*) \le\mathcal F_{\Omega}(U_{t},\psi)
\end{equation}
for  every  $t>0$. 
Being 
$$\mathcal F_{\Omega^{\diesis}}(S_{r(t)},\psi^*)= \frac{1}{\gamma_N( S_{r(t)})}
	\left( -\int_{S_{r(t)}} \psi^2\phi_Ndx + 
	\int_{\{x_1=\sigma^{\diesis}\}} \psi \phi_N\,d\mathcal H^{N-1}(x)
	\right),
	$$
in order to prove inequality \eqref{compar}, being $\gamma_N(U_{t})=\gamma_N(S_{r(t)})$ we have to show that 
\begin{gather}
-\int_{S_{r(t)}}\psi ^{2}\phi _{N}dx+\int_{\{x_{1}=\sigma ^{^{\#}}\}}\psi
\phi _{N}\,d\mathcal{H}^{N-1}(x)\leq   \label{confr_f} \\
-\int_{U_{t}}\psi ^{2}\phi _{N}\,dx+\int_{\partial U_{t}^{\text{int}}}\psi
\phi _{N}\,d\mathcal{H}^{N-1}(x)+\beta \int_{\partial U_{t}^{\text{ext}
}}\phi _{N}\,d\mathcal{H}^{N-1}(x).  \notag
\end{gather}

Regarding the first integral in the left and right hand-sides in \eqref{confr_f}, by construction we observe that the functions $\psi$ and $\psi^*$  are equimeasurable, with respect to the 
Gaussian measure, and therefore 
\begin{equation}
\label{eqpr}
	\int_{U_{t}} \psi^{2}\phi_Ndx =\int_{S_{r(t)}} (\psi^*)^{2}\phi_Ndx.
\end{equation}
Moreover, by the  isoperimetric inequality \eqref{isop}, and being, by Lemma \ref{mon_beta}, $\beta_{r(t)}\le \beta$ for all $t>0$, we have that
\begin{multline}
\label{ineqpr}
	\int_{\{x_1=r(t)\}} \psi^* \phi_N\,d\mathcal H^{N-1}(x)= \beta_{r(t)} P_{\phi_N}(S_{r(t)})\le \\\le \beta_{r(t)}P_{\phi_N}(U_{t}) 
	 \le \int_{\partial U_t^{\text{int}}} \psi  \phi_N \, d\mathcal H^{N-1}(x) + \beta \int_{\partial U_t^{\text{ext}}} \phi_N \, d\mathcal H^{N-1}(x).
\end{multline}
Hence, combining \eqref{eqpr} and \eqref{ineqpr} we get \eqref{confr_f} and then \eqref{compar}. 
Then, by Theorems \ref{theoform}, \ref{teotesithm} and equation \eqref{compar} we get, for $t \in T \in ]0,+\infty[$ defined in Theorem  \ref{teotesithm}, it happens that
\[
\lambda_1(\Omega^{\diesis})=
\mathcal F_{\Omega^{\diesis}}(S_{r(t)},\psi^*) \
\le \mathcal F_{\Omega}(U_{t},\psi)
\le \lambda_1(\Omega)
\]
which gives the claim. 

In order to conclude the proof, we have to consider the equality case. Let us suppose that $\lambda_1(\Omega)=\lambda_1(\Omega^{\diesis})$. Then all the inequalities appearing in the first part of the proof become equalities. In particular $P_{\phi_N}(S_{r(t)})=P_{\phi_N}(U_{t})$. By Theorem \ref{isop} we have that $U_t$ are half-spaces for each $t$. Since the level sets of any function are always nested, we have that $u$ depends  on $x_1$ only and it is a decreasing function. Therefore $\Omega=\Omega^{\diesis}$ and $u(x_1)$ coincides with $w$ modulo a constant. 
\end{proof}

\section*{Acknowledgements}
This work has been partially supported by the PRIN project 2017JPCAPN (Italy) grant: 
\lq\lq Qualitative and quantitative aspects of nonlinear PDEs\rq\rq 
and by GNAMPA of INdAM.

\end{document}